\newtheorem{propo}{{\bf Proposition}}[section]
\newtheorem{coro}[propo]{{\bf Corollary}}
\newtheorem{lemma}[propo]{{\bf Lemma}} \newtheorem{theor}[propo]{{\bf
Theorem}} 
\newenvironment{proof}{{\bf Proof.}}{$\Box$}
\def\N{{\mathbb N}}
\begin{document}

\vspace*{1.0in}

\begin{center} ON MAXIMAL SUBALGEBRAS OF LIE ALGEBRAS CONTAINING ENGEL SUBALGEBRAS 
\end{center}
\bigskip

\begin{center} DAVID A. TOWERS 
\end{center}
\bigskip

\begin{center} Department of Mathematics and Statistics

Lancaster University

Lancaster LA1 4YF

England

d.towers@lancaster.ac.uk 
\end{center}
\bigskip

\begin{abstract}
Relationships between certain properties of maximal subalgebras of a Lie algebra $L$ and the structure of $L$ itself have been studied by a number of authors. Amongst the maximal subalgebras, however, some exert a greater influence on particular results than others. Here we study properties of those maximal subalgebras that contain Engel subalgebras, and of those that also have codimension greater than one in $L$.
\par 
\noindent {\em Mathematics Subject Classification 2000}: 17B05, 17B20, 17B30, 17B50.
\par
\noindent {\em Key Words and Phrases}: Lie algebras, c-ideals, maximal subalgebras, Engel subalgebras, solvable algebras, supersolvable algebras, subalgebras of codimension one. 
\end{abstract}

\section{Introduction}
Relationships between certain properties of maximal subalgebras of a Lie algebra $L$ and the structure of $L$ itself have been studied by a number of authors. Amongst the maximal subalgebras, however, some exert a greater influence on particular results than others. Here we study properties of those maximal subalgebras that contain Engel subalgebras. This idea is somewhat akin to that of maximal subgroups containing Sylow subgroups as introduced by Bhattacharya and Mukherjee in \cite{mbhat}, and further studied in \cite{muck}, \cite{mb} and \cite{wang}.
\par

For $x \in L$, the {\em Engel subalgebra}, $E_L(x)$, is the Fitting null-component relative to ad\,$x$. If $U$ is a subalgebra of $L$, the {\em core} of $U$, $U_L$, is the largest ideal of $L$ contained in $U$. We put 
\[ \mathcal{G} = \{ M : M \hbox{ is a maximal subalgebra of } L \hbox{ and } E_L(x) \subseteq M \hbox{ for some } x \in L \},
\]
\[ G(L) = \bigcap_{M \in \mathcal{G}} M \hbox{ if } \mathcal{G} \hbox{ is non-empty}; \hspace{.2cm} G(L) = L \hbox{ otherwise};\hspace{.2cm} \gamma(L) = G(L)_L.
\]

In section two we consider the structure of $\gamma(L)$ and its relationship to properties of $L$ itself. It is shown that $\gamma(L)$ is nil on $L$ and that $L$ is nilpotent precisely when $\gamma(L) = L$. When $L$ is solvable, then $\gamma(L) = \tau(L) = T(L)_L$, where $T(L)$ is the intersection of the self-idealising maximal subalgebras of $L$ (see \cite{arch}). Necessary and sufficient conditions are found for all maximal subalgebras of $L$ to belong to $\mathcal{G}$, provided that the underlying field is algebraically closed and of characteristic zero.
\par

In section three, necessary and sufficient conditions are found for every maximal subalgebra of $L$ belonging to $\mathcal{G}$ to have codimension one in $L$. This result generalises \cite[Theorem 1]{codone}. Now put
\[ \mathcal{H} = \{M : M \hbox{ is a subalgebra of } L \hbox{ of codimension } > 1 \hbox{ in } L \}, \]
\[ H(L) = \bigcap_{M \in \mathcal{H}} M \hbox{ if } \mathcal{H} \hbox{ is non-empty}; \hspace{.2cm} H(L) = L \hbox{ otherwise};\hspace{.2cm} \eta(L) = H(L)_L, \]
\[ \mathcal{D} = \{M \in {\mathcal G} : M \in \mathcal{H} \}, \]
\[ D(L) = \bigcap_{M \in \mathcal{D}} M \hbox{ if } \mathcal{D} \hbox{ is non-empty}; \hspace{.2cm} D(L) = L \hbox{ otherwise};\hspace{.2cm} \delta(L) = D(L)_L.
\]

We then investigate some basic proerties of $\eta(L)$ and $\delta(L)$, showing, in particular, that if $L$ is solvable then they are supersolvable. Finally it is shown that all maximal subalgebras $M$ of $L$ with $M \in \mathcal{D}$ are c-ideals of $L$ if and only if $L$ is solvable. This generalises \cite[Theorem 3.1]{cideal}.  
\par
Throughout $L$ will denote a finite-dimensional Lie algebra over a field $F$, which is arbitrary unless restrictions on $F$ are specified. We define the {\em derived series} for $L$ inductively by $L^{(0)} = L$, $L^{(i+1)} = [L^{(i)}, L^{(i)}]$ for all $i \geq 0$. The symbol $\oplus$ will denote a direct sum of the underlying vector space structure.  

\section{Maximals containing Engel subalgebras}
First we note that the maximals in $\mathcal{G}$ are precisely those containing Cartan subalgebras, provided that the field has enough elements; also that $G(L)$ is an ideal when $F$ has characteristic zero.

\begin{lemma}\label{l:cart} Let $L$ be a Lie algebra over an field $F$ with at least dim\,$L$ elements. Then $M \in \mathcal{G}$ if and only if $M$ contains a Cartan subalgebra of $L$.
\end{lemma}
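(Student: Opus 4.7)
The plan is to reduce both directions to Barnes's characterisation of Cartan subalgebras: if $F$ has at least $\dim L$ elements, then the Cartan subalgebras of $L$ are precisely the minimal Engel subalgebras of $L$, that is, the subalgebras $E_L(x)$ containing no properly smaller $E_L(y)$. In particular, every Cartan subalgebra has the form $E_L(x)$ for some $x \in L$, and conversely every Engel subalgebra $E_L(x)$ contains a Cartan subalgebra of $L$ obtained as a minimal term in a descending chain of Engel subalgebras.

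For the ``if'' direction, assume $M$ contains a Cartan subalgebra $C$ of $L$. By Barnes's theorem we may write $C = E_L(x)$ for some $x \in L$, so $E_L(x) \subseteq M$ and therefore $M \in \mathcal{G}$.

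For the ``only if'' direction, suppose $M \in \mathcal{G}$, so $E_L(x) \subseteq M$ for some $x \in L$. Because $L$ is finite-dimensional, the collection of Engel subalgebras contained in $E_L(x)$ has a minimal element $E_L(y)$, which by Barnes's theorem is a Cartan subalgebra of $L$. Since $E_L(y) \subseteq E_L(x) \subseteq M$, we conclude that $M$ contains a Cartan subalgebra.

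The only real obstacle is invoking the correct form of the Barnes/Seligman/Winter result, and in particular seeing that the hypothesis $|F| \ge \dim L$ is exactly what is needed to guarantee both that some $E_L(x)$ is Cartan and that every Cartan subalgebra has the form $E_L(x)$; over very small fields these implications can fail, and the statement of the lemma would need to be weakened accordingly.
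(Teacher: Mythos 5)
Your proof is correct and is essentially the paper's argument: the paper simply cites Barnes's theorem that the minimal Engel subalgebras are precisely the Cartan subalgebras, and you have just spelled out the two directions (a Cartan subalgebra is some $E_L(x)$, and any $E_L(x)$ contains a minimal Engel subalgebra, hence a Cartan subalgebra). Your closing remark about the role of the hypothesis $|F| \ge \dim L$ matches the paper's intent exactly.
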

\begin{proof} Simply note that the minimal Engel subalgebras are precisely the Cartan subalgebras of $L$, by \cite[Theorem 1]{cart}.
\end{proof}

\begin{lemma}\label{l:ideal} Let $L$ be a Lie algebra over a field $F$ of characteristic zero. Then $G(L) = \gamma(L)$.
\end{lemma}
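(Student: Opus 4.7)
The plan is to reduce $G(L) = \gamma(L)$ to showing that $G(L)$ is itself an ideal of $L$, since $\gamma(L) = G(L)_L \subseteq G(L)$ by construction.

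First observe that $\mathcal{G}$ is stable under $\text{Aut}(L)$: if $\sigma \in \text{Aut}(L)$ and $M \in \mathcal{G}$ with $E_L(x) \subseteq M$, then $\sigma(M)$ is again maximal and contains $\sigma(E_L(x)) = E_L(\sigma(x))$, so $\sigma(M) \in \mathcal{G}$. (Equivalently, by Lemma~\ref{l:cart}, since $F$ has characteristic zero and is therefore infinite, $\mathcal{G}$ is the set of maximals containing a Cartan subalgebra, a condition visibly preserved by automorphisms.) Hence $\sigma(G(L)) = G(L)$ for every $\sigma \in \text{Aut}(L)$.

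Applying this to $\sigma = \exp(t\,\text{ad}\,x) \in \text{Aut}(L)$, for $x \in L$ ad-nilpotent and $t \in F$, we get, for every $g \in G(L)$,
\[
g + t[x,g] + \frac{t^2}{2!}[x,[x,g]] + \cdots \in G(L).
\]
This is a polynomial identity in $t$ with coefficients in $L$ (only finitely many nonzero, since $\text{ad}\,x$ is nilpotent), valid for every $t \in F$. Because $F$ is infinite, each coefficient lies in $G(L)$; in particular $[x,g] \in G(L)$. Thus the normaliser $N_L(G(L))$ is a subalgebra of $L$ containing every ad-nilpotent element.

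The remaining step, which is the main obstacle, is to upgrade this to $N_L(G(L)) = L$: the generation of $L$ by its ad-nilpotent elements fails, for instance, in any non-nilpotent solvable algebra. I would handle it by base-changing to the algebraic closure and using a root-space decomposition $L = H \oplus \bigoplus_{\alpha \neq 0} L_\alpha$ relative to a Cartan subalgebra $H$. Each root space $L_\alpha$ ($\alpha \neq 0$) consists of ad-nilpotent elements and so lies in $N_L(G(L))$; since $N_L(G(L))$ is a subalgebra it also contains every $[L_\alpha, L_{-\alpha}]$, which between them recover the part of $H$ visible on the adjoint module of a Levi factor. To capture the remaining toral directions of $H$, lying in the solvable radical, one combines a Levi decomposition with a separate solvable-radical argument (or an induction on $\dim L$). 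This last reduction is the delicate point I expect to take the most care to make airtight.
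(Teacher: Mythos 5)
Your opening moves are correct and match the paper's: $\mathcal{G}$ is stable under $\mathrm{Aut}(L)$ because $\theta(E_L(x)) = E_L(\theta(x))$, so $G(L)$ is automorphism-invariant, and the Vandermonde argument with $\exp(t\,\mathrm{ad}\,x)$ correctly yields $[x,G(L)] \subseteq G(L)$ for every \emph{ad-nilpotent} $x$. But the entire content of the lemma is the passage from ``invariant under automorphisms'' to ``invariant under all inner derivations,'' i.e.\ to $G(L)$ being an ideal (whence $G(L) = G(L)_L = \gamma(L)$), and this is exactly the step you leave unfinished. The paper disposes of it in one line by citing \cite[Corollary 3.2]{frat}, which says that in characteristic zero an automorphism-invariant subalgebra is invariant under all derivations; that citation is doing all the work, and your proposal does not reproduce it.

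The substitute route you sketch cannot succeed as described. The set of ad-nilpotent elements, and even the subalgebra they generate, need not be all of $L$: in the two-dimensional algebra $L = Fx + Fy$ with $[x,y]=y$, the ad-nilpotent elements are exactly $Fy$, the root-space bracket $[L_\alpha,L_{-\alpha}]$ is zero, and $x$ is reached by none of your mechanisms. You acknowledge this (``the remaining toral directions of $H$, lying in the solvable radical'') and defer it to ``a separate solvable-radical argument (or an induction on $\dim L$)'' that you never give; you yourself flag it as the delicate point. There is also a secondary gap in the base-change step: after extending to $\overline{F}$ you would need $G(L) \otimes \overline{F}$ to be invariant under automorphisms of $L \otimes \overline{F}$ (or to relate it to $G(L \otimes \overline{F})$), which does not follow from invariance under $\mathrm{Aut}(L)$. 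As it stands the proof is incomplete at its crux; to finish it you should either invoke the derivation-invariance result the paper cites, or prove directly that $[x,G(L)]\subseteq G(L)$ for arbitrary $x$ (for instance by a scalar-extension argument applied to the derivation $\mathrm{ad}\,x$), rather than trying to generate $L$ from ad-nilpotent elements.
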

\begin{proof} For every automorphism $\theta$ of $L$, and every $x \in L$, $\theta(E_L(x)) = E_L(\theta(x))$, so $G(L)$ is invariant under every automorphism of $L$. It follows from \cite[Corollary 3.2]{frat} that it is invariant under all derivations of $L$, whence the result.
\end{proof}
\bigskip

Next we collect together some basic properties of $\gamma(L)$; in particular, parts (iv) and (v) of the first lemma below generalise characterisations of nilpotent Lie algebras due to Barnes (see \cite{nilp} and \cite{barnes}).

\begin{lemma}\label{l:nilp} Let $L$ be a Lie algebra. Then
\begin{itemize}
\item[(i)] $\gamma(L)$ is nil on $L$, and so nilpotent;
\item[(ii)] $L$ is nilpotent if and only if $L = \gamma(L)$;
\item[(iii)] $L$ is nilpotent if and only if $\mathcal{G} = \emptyset$; 
\item[(iv)] $L$ is nilpotent if and only if $M$ is an ideal of $L$ for all $M \in \mathcal{G}$; and
\item[(v)] $L$ is nilpotent if and only if $L/B$ is nilpotent for some ideal $B$ of $L$ with $B \subseteq \gamma(L)$.
\end{itemize}
\end{lemma}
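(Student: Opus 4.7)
The plan is to prove (i) first, as it carries the main structural content; parts (ii)--(v) then follow as short consequences that essentially reuse a single contradiction mechanism. For (i), I would fix $y \in \gamma(L)$ and take the Fitting decomposition $L = E_L(y) \oplus L_1(y)$ relative to $\text{ad }y$, aiming to force $L_1(y) = 0$. If $L_1(y) \ne 0$ then $E_L(y)$ is a proper subalgebra, hence lies in some maximal $M$, and by construction $M \in \mathcal{G}$; consequently $\gamma(L) \subseteq G(L) \subseteq M$. Since $\gamma(L)$ is an ideal and $L_1(y) = (\text{ad }y)^N L$ for large $N$, the inclusion $[y, L] \subseteq \gamma(L)$ gives $L_1(y) \subseteq \gamma(L) \subseteq M$, so $L = E_L(y) + L_1(y) \subseteq M$ --- contradicting maximality. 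Thus $\text{ad }y$ is nilpotent on $L$ for every $y \in \gamma(L)$, and Engel's theorem upgrades this to nilpotence of $\gamma(L)$ as a Lie algebra.

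Parts (ii) and (iii) are then immediate. If $L$ is nilpotent, $E_L(x) = L$ for every $x$, so $\mathcal{G}$ is empty and $\gamma(L) = L$; conversely, $\gamma(L) = L$ combined with (i) says $L$ is nil on itself, so $L$ is nilpotent by Engel. The same observation handles (iii).

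Parts (iv) and (v) I would handle by reusing the ``enclose $E_L(x)$ in a maximal $M \in \mathcal{G}$ and show $L_1(x) \subseteq M$'' pattern. In the nontrivial direction of (iv), if $L$ were not nilpotent there would be $x$ with $E_L(x) \ne L$, and the enveloping maximal $M \in \mathcal{G}$ would by hypothesis be an ideal; then $x \in E_L(x) \subseteq M$ forces $L_1(x) = (\text{ad }x)^N L \subseteq M$, and again $L \subseteq M$. In the converse half of (v), nilpotence of $L/B$ yields $(\text{ad }x)^n L \subseteq B \subseteq \gamma(L)$, so $L_1(x) \subseteq \gamma(L) \subseteq M$ for any $M \in \mathcal{G}$ containing $E_L(x)$; the same contradiction then gives Engel-nilpotence of $L$. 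The main obstacle is thus (i): once the ideal structure of $\gamma(L)$ is exploited to push $L_1(y)$ into every maximal enveloping $E_L(y)$, the remaining parts are routine variations on the same theme.
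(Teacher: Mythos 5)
Your proof is correct, and for parts (i)--(iii) it is essentially the paper's argument: the key step in (i) is exactly the observation that, since $\gamma(L)$ is an ideal, $L_1(y) \subseteq \gamma(L) \subseteq G(L) \subseteq M$, which together with $E_L(y) \subseteq M$ collapses $L$ into the maximal subalgebra $M$. Where you diverge is in (iv) and (v): the paper invokes Winter's result that a subalgebra containing an Engel subalgebra is self-idealising (so no $M \in \mathcal{G}$ can be an ideal, forcing $\mathcal{G} = \emptyset$), whereas you rerun the Fitting-decomposition contradiction directly --- in (iv) using that $x \in M$ and $M$ an ideal give $[L,x] \subseteq M$ and hence $L_1(x) \subseteq M$, and in (v) using that nilpotence of $L/B$ pushes $L_1(x)$ into $B \subseteq \gamma(L) \subseteq M$. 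Your route is more self-contained (it avoids the external citation and makes all five parts instances of one mechanism), at the cost of not recording the useful intermediate fact that every $M \in \mathcal{G}$ satisfies $I_L(M) = M$, which the paper reuses later (e.g.\ in Proposition \ref{p:solv}). Both arguments are sound.
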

\begin{proof} (i) Let $x \in \gamma(L)$ and suppose that $E_L(x) \neq L$. Then there is a subalgebra $M \in \mathcal{G}$ such that $E_L(x) \subseteq M$. But $L = E_L(x) \oplus L_1(x)$ where $L_1(x) = \cap_{i=1}^{\infty} L (\hbox{ad}\,x)^i \subseteq \gamma(L)$, so $L = M$, a contradiction. The result follows.
\par
\noindent(ii) $L$ is nilpotent if and only if $E_L(x) = L$ for all $x \in L$, by Engel's Theorem. But this holds if and only if $\gamma(L) = L$.
\par
\noindent(iii) This is clear from (ii).
\par
\noindent(iv) If $L$ is nilpotent then all maximal subalgebras of $L$ are ideals of $L$ (see \cite{nilp}). Conversely, suppose that all maximal subalgebras of $L$ in $\mathcal{G}$ are ideals of $L$ and let $M \in \mathcal{G}$. Then $E_L(x) \subseteq M$ for some $x \in L$, so $I_L(M) = M$, by \cite[Corollary 4.4.4.4]{wint}. It follows that $\mathcal{G} = \emptyset$, and thus that $L$ is nilpotent, by (iii).
\par
\noindent(v) Suppose that $B$ is an ideal of $L$ with $B \subseteq \gamma(L)$ such that $L/B$ is nilpotent. Suppose that $\mathcal{G} \not = \emptyset$ and let $M \in \mathcal{G}$. Then there is an $x \in L$ such that $E_L(x) \subseteq M$. It follows from \cite[Corollary 4.4.4.4]{wint} that $I_L(M) = M$. Now $B \subseteq \gamma(L) \subseteq M$ and $M/B$ is a maximal subalgebra of $L/B$, so $M/B$ is an ideal of $L/B$. It follows that $M$ is an ideal of $L$, a contradiction. Hence $\mathcal{G} = \emptyset$ and $L$ is nilpotent, by (iii).
\par
The converse is clear.
\end{proof}

\begin{lemma} \label{l:eng} Let $L$ be a Lie algebra over a field $F$ and let $B$ be an ideal of $L$. Then 
\begin{itemize}
\item[(i)] $(E_L(x) + B)/B \subseteq E_{L/B}(x + B)$ for all $x \in L$;
\item[(ii)] if $B \subseteq M$ and $M/B \in \mathcal{G}$ then $M \in \mathcal{G}$; 
\item[(iii)] $(\gamma(L) + B)/B \subseteq \gamma(L/B)$; 
\item[(iv)] if $F$ has at least dim\,$L$ elements and $M \in \mathcal{G}$ then $M/B \in \mathcal{G}$ if $B \subseteq M$; and
\item[(v)] if $F$ has at least dim\,$L$ elements and $B \subseteq \gamma(L)$ then $\gamma(L)/B = \gamma(L/B)$.
\end{itemize}
\end{lemma}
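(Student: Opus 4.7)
My plan is to address the five parts in order, since each one feeds into the next.

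For (i), I would take $y \in E_L(x)$ so that $(\text{ad}\,x)^n y = 0$ for some $n$. Passing to the quotient, $(\text{ad}\,(x+B))^n(y+B) = 0$, which places $y+B$ in the Fitting null-component of $\text{ad}\,(x+B)$ on $L/B$. Part (ii) then follows by the maximal-subalgebra correspondence: $M/B$ is maximal in $L/B$ and $M$ contains $B$, so $M$ is maximal in $L$; and if $E_{L/B}(x+B) \subseteq M/B$ for some $x$, then by (i) we have $E_L(x) + B \subseteq M$, hence $E_L(x) \subseteq M$, showing $M \in \mathcal{G}$.

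For (iii), I would note that for any $M \in \mathcal{G}$ with $B \subseteq M$ (which certainly holds when $B \subseteq \gamma(L) \subseteq M$, but in general we do not need this), what we really use is: if $\overline{M} \in \mathcal{G}(L/B)$ then $\overline{M} = M/B$ for some $M \in \mathcal{G}(L)$ by (ii), and hence $\gamma(L) \subseteq G(L) \subseteq M$, giving $(\gamma(L)+B)/B \subseteq M/B = \overline{M}$. Intersecting over all such $\overline{M}$ shows $(\gamma(L)+B)/B \subseteq G(L/B)$; but the left side is an ideal of $L/B$ (being a sum of two ideals), so it lies inside the core $\gamma(L/B) = G(L/B)_{L/B}$.

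Part (iv) is the one I expect to be the real obstacle, because the containment $(E_L(x)+B)/B \subseteq E_{L/B}(x+B)$ from (i) goes the wrong way to pass $E_L(x) \subseteq M$ directly into an Engel containment downstairs. This is where Lemma~\ref{l:cart} earns its keep: since $F$ has at least $\dim L$ elements, $M \in \mathcal{G}$ contains a Cartan subalgebra $C$ of $L$. Invoking the standard fact (from \cite{wint}) that the image of a Cartan subalgebra under a surjective homomorphism is a Cartan subalgebra of the quotient whenever the field has enough elements, $(C+B)/B$ is a Cartan subalgebra of $L/B$ contained in $M/B$. A second application of Lemma~\ref{l:cart} (to $L/B$, which also has at least $\dim(L/B)$ field elements available) yields $M/B \in \mathcal{G}(L/B)$.

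For (v), one inclusion $\gamma(L)/B \subseteq \gamma(L/B)$ is immediate from (iii) since $B \subseteq \gamma(L)$. For the reverse, let $K$ be the preimage of $\gamma(L/B)$ in $L$, which is an ideal containing $B$. For any $M \in \mathcal{G}$ we have $B \subseteq \gamma(L) \subseteq M$, so (iv) gives $M/B \in \mathcal{G}(L/B)$, hence $\gamma(L/B) \subseteq G(L/B) \subseteq M/B$; pulling back, $K \subseteq M$. Thus $K$ is an ideal of $L$ contained in $G(L)$, so $K \subseteq G(L)_L = \gamma(L)$, yielding $\gamma(L/B) = K/B \subseteq \gamma(L)/B$.
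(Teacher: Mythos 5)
Your proof is correct and follows essentially the same route as the paper: the same direct computation for (i), the same deductions of (ii), (iii) and (v) from the earlier parts, and the same Cartan-subalgebra argument via Lemma~\ref{l:cart} and the image-of-a-Cartan-subalgebra theorem for (iv). The paper merely states "this follows from (i)/(ii)/(iii) and (iv)" for the derived parts; you have filled in those details in the natural and intended way.
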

\begin{proof} (i) Let $y \in E_L(x)$. Then $y$(ad$\,x)^n = 0$ for some $n \in \N$, and so $(y + B)$(ad$\,(x + B))^n \in B$. It follows that $y + B \in E_{L/B}(x + B)$.
\par

\noindent (ii) This follows from (i).
\par 

\noindent (iii) This follows from (ii).
\par

\noindent (iv) Suppose $M \in \mathcal{G}$. Then $M$ contains a Cartan subalgebra $C$ of $L$, by Lemma \ref{l:cart}. But now $(C + B)/B$ is a Cartan subalgebra of $L/B$, by \cite[Theorem 4.4.5.1]{wint}, whence $M/B \in \mathcal{G}$.
\par

\noindent (v) This follows from (iii) and (iv).
\end{proof}
\bigskip

The {\em Frattini subalgebra}, $F(L)$, of $L$ is the intersection of the maximal subalgebras of $L$, and the {\em Frattini ideal} is $\phi(L) = F(L)_L$. We say that $L$ is {\em $\phi$-free} if $\phi(L) = 0$. If $U$ is a subalgebra of $L$, the {\em idealiser} of $U$ in $L$ is $I_L(U) = \{x \in L : [x, U] \subseteq U\}$. Next we see that if $L$ is solvable then the elements of $\mathcal{G}$ are precisely the self-idealising maximal subalgebras of $L$.

\begin{propo}\label{p:solv} Let $L$ be solvable. Then $M \in \mathcal{G}$ if and only if $I_L(M) = M$.
\end{propo}
\begin{proof} If $M \in \mathcal{G}$ then $E_L(x) \subseteq M$ for some $x \in L$, whence $I_L(M) = M$, by \cite[Corollary 4.4.4.4]{wint}.
\par
Now let $L$ be a solvable Lie algebra of minimal dimension having a subalgebra $M$ such that $I_L(M) = M$ but $M \notin \mathcal{G}$. Then $L$ is $\phi$-free and $M_L = 0$, by Lemma \ref{l:eng} (ii). Let $A$ be a minimal ideal of $L$. Then $L = A \oplus M$. Pick a minimal ideal $B$ of $M$ and let $b \in B$. We have that $L = E_L(b) \oplus L_1(b)$. Moreover, $L_1(b) \subseteq A$, since $B$ is abelian, so $L = E_L(b) + A$. Now $A + B$ is an ideal of $L$, and hence so is $[A, B] = [A+B, A+B]$. If $E_L(b) = L$ for all $b \in B$ then $[A,B] = 0$ and $B$ is an ideal of $L$, contradicting the fact that $M_L = 0$. 
\par
It follows that there is a $b \in B$ and a maximal subalgebra $K$ of $L$ such that $E_L(b) \subseteq K$. Now $L = A \oplus K$ so $E_A(b) = 0$. But it is easy to check that $E_L(b) = E_A(b) \oplus E_M(b) = E_M(b)$, so $M \in \mathcal{G}$. This contradiction establishes the converse.
\end{proof} 

\begin{coro}\label{c:ss} Let $L$ be a solvable Lie algebra. Then $L$ is supersolvable if and only if $M$ has codimension one in $L$ for every $M \in \mathcal{G}$.
\end{coro}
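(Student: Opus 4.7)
The forward implication is essentially Barnes's theorem: a solvable Lie algebra is supersolvable if and only if every one of its maximal subalgebras has codimension one. So if $L$ is supersolvable, in particular those maximals lying in $\mathcal{G}$ have codimension one, and nothing more is needed here.

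For the converse, the plan is to reduce to Barnes's characterization by showing that \emph{every} maximal subalgebra of $L$ (not just those in $\mathcal{G}$) has codimension one. Given a maximal subalgebra $M$ of $L$, look at the idealiser $I_L(M)$. Because $M \subseteq I_L(M) \subseteq L$ and $M$ is maximal, there are only two possibilities: either $I_L(M) = M$ or $I_L(M) = L$.

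In the first case, $M$ is self-idealising, so by Proposition \ref{p:solv} we have $M \in \mathcal{G}$, and the hypothesis gives that $M$ has codimension one in $L$. In the second case $M$ is an ideal of $L$, so $L/M$ is a Lie algebra whose only subalgebras are $0$ and itself. Since $L$ is solvable, so is $L/M$, and the only solvable Lie algebra without proper nonzero subalgebras is one-dimensional; thus again $M$ has codimension one. Having shown that every maximal subalgebra of $L$ has codimension one, Barnes's theorem yields that $L$ is supersolvable.

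The only place where care is needed is the dichotomy step: we must be sure that the non-self-idealising case really does force $M$ to be an ideal with one-dimensional quotient. This is immediate from maximality and solvability, so there is no serious obstacle; the corollary is essentially a direct consequence of Proposition \ref{p:solv} combined with the classical Barnes criterion.
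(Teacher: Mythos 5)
Your proof is correct and takes essentially the same route as the paper: both directions reduce to Barnes's criterion that a solvable Lie algebra is supersolvable if and only if every maximal subalgebra has codimension one, and for the converse you use Proposition \ref{p:solv} exactly as the paper does to show that any maximal subalgebra outside $\mathcal{G}$ is an ideal and hence of codimension one. The only difference is cosmetic: you spell out why a maximal ideal has one-dimensional quotient, a step the paper leaves implicit.
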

\begin{proof} Suppose first that $M$ has codimension one in $L$ for every $M \in \mathcal{G}$. Let $K$ be any maximal subalgebra of $L$ such that $K \notin \mathcal{G}$. Then $K$ is an ideal of $L$, by Proposition \ref{p:solv}, and so has codimension one in $L$. It follows that $L$ is supersolvable, by \cite[Theorem 7]{barnes}. 
\par
The converse follows immediately from \cite[Theorem 7]{barnes}. 
\end{proof}

\begin{coro}\label{c:facss} Let $B$ be an ideal of $L$ with $B \subseteq \gamma(L)$ such that $L/B$ is supersolvable. Then $L$ is supersolvable.
\end{coro}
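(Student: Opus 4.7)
The plan is to reduce to the preceding Corollary \ref{c:ss} by showing that $L$ is solvable and that every $M \in \mathcal{G}$ has codimension one in $L$.

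First I would establish solvability. Since $\gamma(L)$ is nilpotent by Lemma \ref{l:nilp}(i), the ideal $B$ is solvable; as $L/B$ is supersolvable, hence solvable, this forces $L$ itself to be solvable.

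Next I would dispose of the trivial case. If $\mathcal{G} = \emptyset$, then Lemma \ref{l:nilp}(iii) gives that $L$ is nilpotent, and a nilpotent Lie algebra is automatically supersolvable (refine the lower central series to get a chain of ideals with one-dimensional quotients).

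The substantive case is $\mathcal{G} \neq \emptyset$. Here I would take an arbitrary $M \in \mathcal{G}$ and observe that
\[ B \subseteq \gamma(L) = G(L)_L \subseteq G(L) \subseteq M \]
from the very definitions of $G(L)$ and $\gamma(L)$. Consequently $M/B$ is a maximal subalgebra of $L/B$, and since $L/B$ is supersolvable, Barnes' theorem (\cite[Theorem 7]{barnes}) gives that $M/B$ has codimension one in $L/B$; therefore $M$ has codimension one in $L$. Applying Corollary \ref{c:ss} then yields that $L$ is supersolvable.

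I do not anticipate a real obstacle: once one notices the inclusion $B \subseteq \gamma(L) \subseteq M$ for every $M \in \mathcal{G}$, the problem collapses to passing codimension through the quotient $L/B$ and invoking Corollary \ref{c:ss}. The only point requiring a moment's care is handling the empty-$\mathcal{G}$ case separately, since Corollary \ref{c:ss} is vacuous there and one must appeal to Lemma \ref{l:nilp}(iii) instead.
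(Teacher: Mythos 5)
Your proof is correct and is evidently the intended argument: the paper states this as an immediate consequence of Corollary \ref{c:ss} and gives no proof. Your reduction (solvability from $B\subseteq\gamma(L)$ nilpotent plus $L/B$ solvable; $B\subseteq G(L)\subseteq M$ for every $M\in\mathcal{G}$, so codimension one passes down from $L/B$ via Barnes' theorem; the $\mathcal{G}=\emptyset$ case handled by Lemma \ref{l:nilp}(iii)) fills in exactly the steps the paper leaves implicit.
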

\bigskip

Following \cite{arch} we put
\[ \mathcal{T} = \{M : M \hbox{ is a maximal subalgebra of } L \hbox{ and } I_L(M) = M \}, \]
\[ T(L) = \bigcap_{M \in \mathcal{T}} M \hbox{ if } \mathcal{T} \hbox{ is non-empty}; \hspace{.2cm} T(L) = L \hbox{ otherwise};\hspace{.2cm} \tau(L) = T(L)_L. \]
Then we have the following result.

\begin{coro}\label{c:tau} If $L$ is solvable, then $\gamma(L) = \tau(L)$.
\end{coro}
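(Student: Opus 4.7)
The plan is to observe that this corollary is essentially a direct translation of Proposition \ref{p:solv} from the level of individual maximal subalgebras to the level of their intersections, followed by taking cores.

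First I would note that by Proposition \ref{p:solv}, when $L$ is solvable a maximal subalgebra $M$ lies in $\mathcal{G}$ if and only if $I_L(M) = M$, i.e., if and only if $M \in \mathcal{T}$. In other words, the collections $\mathcal{G}$ and $\mathcal{T}$ coincide.

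Next, I would check that both collections are non-empty under the same conditions, so the definitions ``$G(L) = L$ otherwise'' and ``$T(L) = L$ otherwise'' activate together; this is automatic since $\mathcal{G} = \mathcal{T}$ as sets. Taking the intersection over this common collection yields $G(L) = T(L)$, and then taking the largest ideal of $L$ contained in this subspace gives $\gamma(L) = G(L)_L = T(L)_L = \tau(L)$.

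There is essentially no obstacle here, since the content of the result has already been established in Proposition \ref{p:solv}; the corollary just repackages it. The only thing to be mindful of is ensuring the two ``otherwise'' clauses in the definitions of $G(L)$ and $T(L)$ agree, which they do automatically from $\mathcal{G} = \mathcal{T}$. No appeal to Lemma \ref{l:ideal} (and hence no restriction on the characteristic of $F$) is needed, because we are passing to the core $G(L)_L$ in any case, which is an ideal by construction.
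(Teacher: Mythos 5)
Your proof is correct and is precisely the argument the paper intends (the corollary is stated without proof, being immediate from Proposition \ref{p:solv}): for solvable $L$ the sets $\mathcal{G}$ and $\mathcal{T}$ of maximal subalgebras coincide, so $G(L)=T(L)$ and hence $\gamma(L)=\tau(L)$. Your remarks about the ``otherwise'' clauses and the non-necessity of Lemma \ref{l:ideal} are accurate.
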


The hypothesis that $L$ be solvable cannot be removed from Proposition \ref{p:solv}, as the next result will show. A subalgebra $U$ of a semisimple Lie algebra $L$ is called {\em regular} if we can choose a basis for $U$ in such a way that every vector of this basis is either a root vector of $L$ corresponding to some Cartan subalgebra $C$ of $L$, or otherwise belongs to $C$; $U$ is an {\em $R$-subalgebra} of $L$ if it is contained in a regular subalgebra of $L$, and is an {\em $S$-subalgebra} otherwise (see \cite[page 158]{dynk}).

\begin{propo}\label{p:reg} Let $L$ be a semisimple Lie algebra over an algebraically closed field $F$ of characteristic zero. Then $M \in \mathcal{G}$ if and only if $M$ is a regular maximal subalgebra of $L$.
\end{propo}
\begin{proof} Suppose first that $M \in \mathcal{G}$. Then $M$ contains a Cartan subalgebra of $L$ by Lemma \ref{l:cart} and so is clearly regular.
\par
Now suppose that $M$ is a regular maximal subalgebra of $L$. Then $M$ is either parabolic or semisimple of maximal rank (see \cite{dynk}), whence $M \in \mathcal{G}$.
\end{proof}
\bigskip

In view of the above result, if $M$ is a maximal subalgebra that is an $S$-subalgebra of a semisimple Lie algebra $L$ over an algebraically closed field of characteristic zero, then $M$ is self-idealising but not in $\mathcal{G}$. This observation yields the following corollary. 

\begin{coro}\label{c:amss} Let $L$ be a semisimple Lie algebra over an algebraically closed field $F$ of characteristic zero, and suppose that all maximal subalgebras of $L$ belong to $\mathcal{G}$. Then $L = sl(2,F)$.
\end{coro}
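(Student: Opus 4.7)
I plan to argue the contrapositive: if $L$ is semisimple over an algebraically closed field $F$ of characteristic zero and $L \neq sl(2,F)$, then $L$ admits a maximal subalgebra not in $\mathcal{G}$. By Proposition \ref{p:reg}, this amounts to exhibiting a maximal $S$-subalgebra of $L$; equivalently, since the regular maximal subalgebras are exactly those containing a Cartan, a maximal subalgebra of $L$ containing no Cartan of $L$.

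The first step is reduction to the simple case. Write $L = L_1 \oplus \cdots \oplus L_k$ as a direct sum of simple ideals, and split into two cases. \emph{Case A: some summand $L_i$ is not isomorphic to $sl(2,F)$.} Assuming the simple case is known, pick a maximal $S$-subalgebra $S_i$ of $L_i$ and put $M = S_i \oplus \bigoplus_{j \neq i} L_j$. Maximality of $S_i$ in $L_i$ yields maximality of $M$ in $L$. Any Cartan of $L$ decomposes as $H = H_1 \oplus \cdots \oplus H_k$ with $H_j$ a Cartan of $L_j$, so $H \subseteq M$ would force $H_i \subseteq S_i$, contradicting the $S$-property of $S_i$. \emph{Case B: every $L_i \cong sl(2,F)$}, so $k \geq 2$. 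Fix an isomorphism $\phi : L_1 \to L_2$, let $\Delta = \{(x,\phi(x)) : x \in L_1\}$, and put $M = \Delta \oplus L_3 \oplus \cdots \oplus L_k$. A strict intermediate subalgebra between $\Delta$ and $L_1 \oplus L_2$ would, by simplicity of $L_1$ and $L_2$, have to contain one of the ideals $L_1, L_2$ and hence both, so $M$ is maximal in $L$. Moreover $M$ contains no Cartan of $L$, since otherwise $H_1 \oplus H_2 \subseteq \Delta$ would yield a $2$-dimensional abelian subalgebra of $\Delta \cong sl(2,F)$, which has rank one.

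It remains to handle the simple case: $L$ simple, $L \neq sl(2,F)$. Here I would appeal directly to Dynkin's classification \cite{dynk}: the tables of maximal subalgebras of the simple Lie algebras exhibit, in every Cartan type other than $A_1$, at least one maximal $S$-subalgebra (for instance, for $A_n$ with $n \geq 2$ the natural image of $so(n+1,F)$ inside $sl(n+1,F)$, whose rank $\lfloor (n+1)/2 \rfloor$ is strictly less than $n$). Type $A_1 = sl(2,F)$ is the unique exception, since its only proper maximal subalgebra is the $2$-dimensional Borel, which is regular. The contrapositive then yields $L = sl(2,F)$.

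The main obstacle is the simple case, which rests on the depth of Dynkin's classification of maximal subalgebras. By contrast the reduction is elementary, requiring only the decomposition of a Cartan of $L$ along the simple ideals of $L$ together with the rank-one observation for $sl(2,F)$ used in Case B.
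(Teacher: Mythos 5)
Your proof is correct and follows essentially the same route as the paper: reduction to the simple summands via the decomposition of Cartan subalgebras along the simple ideals, an appeal to Dynkin's classification to settle the simple case, and the diagonal subalgebra of $sl(2,F) \oplus sl(2,F)$ to rule out more than one summand. The only cosmetic differences are that you argue the contrapositive, cite Dynkin's tables of maximal $S$-subalgebras rather than the existence of an $S$-subalgebra isomorphic to $sl(2,F)$, and exclude the diagonal by the absence of a Cartan subalgebra rather than by the paper's observation that $s \in E_L(s+\bar{s})$.
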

\begin{proof} First suppose that $L$ is simple. If $L \ne sl(2,F)$ then $L$ has an $S$-subalgebra isomorphic to $sl(2,F)$ (see \cite[section 9, pages 168 - 175]{dynk}). This subalgebra must be inside a maximal subalgebra which is itself an $S$-subalgebra, and so is not in $\mathcal{G}$.
\par

Now suppose that $L$ is semisimple, so that $L = S_1 \oplus \ldots \oplus S_n$, where $S_i$ is a simple ideal of $L$, and let $M$ be a maximal subalgebra of $S_i$. Then $$\sum^n_{\substack{j=1 \\ j \ne i}} S_i \oplus M$$ is a maximal subalgebra of $L$ and so contains a Cartan subalgebra $C$ of $L$. But now, if $\pi_i$ is the projection map of $L$ onto $S_i$, $\pi_i(C)$ is a Cartan subalgebra of $M$. It follows that $S_i \cong sl(2,F)$ for each $1 \leq i \leq n$. 
\par

Finally, let $S$ be $sl(2,F)$, let $\bar{S}$ be an isomorphic copy of $S$ and denote the image of $s \in S$ in $\bar{S}$ by $\bar{s}$. Put $L = S \oplus \bar{S}$ with $[S, \bar{S}] = 0$. It is easy to check that the diagonal subalgebra $M = \{ x \in L : x = s + \bar{s} \hbox{ for some } s \in S \}$ is maximal in $L$. However, $s \in E_L(s + \bar{s})$ for every $s \in S$, so $M \notin \mathcal{G}$.\end{proof} 
\bigskip

The {\em abelian socle} of $L$, $Asoc\,L$ is the sum of the minimal abelian ideals of $L$. We can use the above result to classify the Lie algebras all of whose maximal subalgebras of $L$ belong to $\mathcal{G}$. 

\begin{theor}\label{t:class} Let $L$ be a Lie algebra over an algebraically closed field $F$ of characteristic zero. Then the following are equivalent: 
\begin{itemize}
\item[(i)] all maximal subalgebras of $L$ belong to $\mathcal{G}$; and 
\item[(ii)] $L/ \phi(L) = Asoc\,(L/\phi(L)) \oplus S$ where $S \cong sl(2,F)$ and each minimal ideal of $L/\phi(L)$ has even dimension. 
\end{itemize}
\end{theor}
\begin{proof} Suppose first that (i) holds and that $\phi(L) = 0$. Then we have that $L = Asoc\,(L) \oplus (S \oplus B)$ where $S$ is a semisimple subalgebra, $B$ is an abelian subalgebra and $[S, B] = 0$, by \cite[Theorems 7.4 and 7.5]{frat}. Let $C$ be a Cartan subalgebra of $L$ and let $L = C \oplus L_1$ be the Fitting decomposition of $L$ relative to $C$. Clearly $L_1 \subseteq L^{(1)} \subseteq Asoc\,(L) \oplus S$. If $B \neq 0$ it follows that any maximal subalgebra of $L$ containing $Asoc\,(L) \oplus S$ cannot contain a Cartan subalgebra of $L$. This yields that $B = 0$. That $S \cong sl(2,F)$ then follows from Lemma \ref{l:eng}.
\par
Now let $A$ be a minimal ideal of $L$. Then $A$ is an irreducible $sl(2,F)$-module and so has the structure given in \cite[pages 83-86]{jac}. Let $Fh$ be a Cartan subalgebra of $S$ and let $D$ be its centralizer in $A$. Then $E = Fh + D$ is a Cartan subalgebra of $K = A + S$ (see \cite{dix}). If $A$ has odd dimension Cartan subalgebras of $K$ have dimension greater than 1 (put $i = m/2$ in \cite[(36), page 85]{jac}). But $S$ is a maximal subalgebra of $K$ and contains no such Cartan subalgebra.
\par
If $\phi(L) \neq 0$, then $L/\phi(L)$ satisfies the hypotheses of (i) by Lemma \ref{l:eng}, and so (ii) follows.
\par
So now assume that (ii) holds. We can assume that $\phi(L) = 0$, since, if we can prove (ii) for this case, the result will follow from Lemma \ref{l:eng}. Let $A = Asoc\,(L) = A_1 \oplus \ldots \oplus A_n$. It straightforward to check, using \cite[(36), page 85]{jac}), that if $Fh$ is a Cartan subalgebra of $S$ then $C_A(h) = 0$. It follows from \cite{dix} that the Cartan subalgebras of $L$ are one-dimensional. Now the maximal subalgebras of $L$ are of the form $A \oplus K$, where $K$ is a maximal subalgebra of $S$, or $$\sum^n_{\substack{j=1 \\ j \ne i}} A_i \oplus S.$$ Clearly each of these contains a Cartan subalgebra of $L$, and (i) follows.
\end{proof}

\section{Maximal subalgebras of codimension one}
Our objective here is to generalise Corollary \ref{c:ss} and \cite[Theorem 1]{codone}. First we recall the definition of the algebras $L_m(\Gamma)$ over a field $F$ of characteristic zero or $p$, where $p$ is prime, as given by Amayo in \cite[page 46]{am}. Let $m$ be a positive integer satisfying 
$$m = 1, \hspace{1cm} \hbox{or if } p \hbox{ is odd, } \hspace{.2cm} m = p^r - 2 \hspace{.2cm} (r \geq 1),$$ 
$$\hbox{or if } p =2, \hspace{.2cm} m = 2^r - 2 \hbox{ or } m = 2^r - 3 \hspace{.2cm} (r \geq 2).$$ 
Let $\Gamma = \{\gamma_0, \gamma_1, \ldots\} \subseteq F$ subject to 
$$(m + 1 - i) \gamma_i = \gamma_{m+i-1} = 0 \hspace{.3cm} \hbox{for all } i \geq 1, \hbox{ and}$$ $$\lambda_{i,k+1-i} \gamma_{k+1} = 0 \hspace{.3cm} \hbox{for all } i,k \hbox{ with } 1 \leq i \leq k.$$
Let $L_m(\Gamma)$ be the Lie algebra over $F$ with basis $v_{-1}, v_0, v_1, \ldots, v_m$ and products 
$$[v_{-1},v_i] = -[v_i,v_{-1}] = v_{i-1} + \gamma_i v_m, \hspace{1cm} [v_{-1}, v_{-1}] = 0,$$ 
$$[v_i,v_j] = \lambda_{ij} v_{i+j} \hbox{ for all } i,j \hbox{ with } 0 \leq i,j \leq m,$$
where $v_{m+1} = \ldots = v_{2m} = 0$.    
\par
We also let $H_{m,i}$ be the subspace spanned by $v_i, \ldots, v_m$.
\par
We shall need the following classification of Lie algebras with core-free subalgebras of codimension one as given in \cite{am}.

\begin{theor}\label{t:am} (\cite[Theorem 3.1]{am})
Let $L$ have a core-free subalgebra of codimension one. Then either (i) dim $L \leq 2$, or else (ii) $L \cong L_m(\Gamma)$ for some $m$ and $\Gamma$ satisfying the above conditions.
\end{theor}
\medskip

We shall also need the following properties of $L_m(\Gamma)$ which are given by Amayo in \cite{am}.

\begin{theor}\label{t:gamma} (\cite[Theorem 3.2]{am})
\begin{itemize}
\item[(i)] If $m > 1$ and $m$ is odd, then $L_m(\Gamma)$ is simple and $H_{m,0}$ is the only subalgebra of codimension one.
\item[(ii)] If $m > 1$ and $m$ is even, then $L_m(\Gamma)$ has precisely two subalgebras of codimension one in $L_m$, namely $L_m^{(1)}$ and $H_{m,0}$.
\item[(iii)] $L_1(\Gamma)$ has a basis $\{u_{-1}, u_0, u_1 \}$ with multiplication $[u_{-1}, u_0] = u_{-1} + \gamma_0 u_1$ $(\gamma_0 \in F, \gamma_0 = 0$ if $\Gamma = \{0\})$, $[u_{-1}, u_1] = u_0, [u_0, u_1] = u_1$.
\item[(iv)] If $F$ has characteristic different from two then $L_1(\Gamma) \cong L_1(0) \cong sl(2,F)$.
\item[(v)] If $F$ has characteristic two then $L_1(\Gamma) \cong L_1(0)$ if and only if $\gamma_0$ is a square in $F$. 
\end{itemize}
\end{theor}

\begin{lemma}\label{l:dim} Let $x \in L$ be ad-nilpotent. Then (ad\,$x)^{dim\,L} = 0$.
\end{lemma}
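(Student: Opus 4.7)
The statement is a pure linear-algebra fact: if $T = \operatorname{ad} x$ is a nilpotent endomorphism of the finite-dimensional vector space $L$, then $T^{\dim L} = 0$. So my plan is to forget the Lie structure for the purposes of this lemma and argue entirely at the level of the endomorphism $T$ acting on $L$.

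The cleanest route is via the descending chain of subspaces
\[
L \;\supseteq\; T(L) \;\supseteq\; T^2(L) \;\supseteq\; T^3(L) \;\supseteq\; \cdots.
\]
I would argue that as long as $T^k(L)\neq 0$, one has $T^{k+1}(L)\subsetneq T^k(L)$. Indeed, if equality held for some $k$, then $T$ restricted to the subspace $W := T^k(L)$ would be surjective, and since $W$ is finite-dimensional this restriction would also be injective. Iterating gives $T^j(L)=W\neq 0$ for all $j\geq k$, contradicting the ad-nilpotence of $x$. Hence each inclusion in the chain is strict until we reach $0$, so the dimensions drop by at least one at each step, and we must reach $0$ after at most $\dim L$ steps, giving $T^{\dim L}(L)=0$.

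An alternative one-line argument would be to invoke Cayley--Hamilton: the characteristic polynomial of a nilpotent operator on an $n$-dimensional space is $t^n$, so $T^n=0$ follows immediately. Either formulation suffices; I would probably include the descending-chain version since it is self-contained and avoids appealing to Cayley--Hamilton. There is no real obstacle here — the only thing to be careful about is the finite-dimensionality of $L$, which is explicit in the paper's standing hypothesis, so the argument goes through over any field.
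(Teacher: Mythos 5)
Your proof is correct, but it runs along a different (equally standard) line from the paper's. The paper argues pointwise: if $y(\mathrm{ad}\,x)^{n}=0$ but $y(\mathrm{ad}\,x)^{n-1}\neq 0$, then the string $y,\, y(\mathrm{ad}\,x),\,\ldots,\, y(\mathrm{ad}\,x)^{n-1}$ is linearly independent (apply successive powers of $\mathrm{ad}\,x$ to any dependence relation to kill the coefficients one at a time), so $n\leq \dim L$ for every $y$, which is exactly the claim. You instead argue globally via the strictly descending chain of images $L\supseteq T(L)\supseteq T^{2}(L)\supseteq\cdots$, with strictness forced by the fact that a surjective endomorphism of a nonzero finite-dimensional space cannot be nilpotent. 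Both arguments are self-contained linear algebra and both are complete; the paper's version has the mild advantage of only needing that each individual vector is eventually annihilated (it bounds the local nilpotency degree of each $y$ directly), whereas your chain argument invokes the global nilpotency of $T$ to rule out stabilisation at a nonzero subspace --- a distinction with no consequence here, since ad-nilpotence is a global hypothesis. Your Cayley--Hamilton aside is also valid. No gaps.
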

\begin{proof} It is easy to check that if $y\,(\hbox{ad\,}x)^{n} = 0$ but $y\,(\hbox{ad\,}x)^{n-1} \neq 0$ then the elements $y, y\,(\hbox{ad\,}x), \ldots, y\,(\hbox{ad\,}x)^{n-1}$ are linearly independent.
\end{proof}

\begin{lemma}\label{l:simple} Let $L \cong L_m(\Gamma)$. Then all maximal subalgebras $M \in {\mathcal G}$ have codimension one in $L$ if and only if $L \cong L_1(0)$.
\end{lemma}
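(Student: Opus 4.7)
For the reverse direction, $\dim L_1(0) = 3$, so maximal subalgebras have codimension one or two; it suffices to show every one-dimensional subspace of $L_1(0)$ lies in some two-dimensional subalgebra, so that no one-dimensional subalgebra is maximal. Using Theorem~\ref{t:gamma}(iii) and the relations $[u_{-1},u_0] = u_{-1}$, $[u_{-1},u_1] = u_0$, $[u_0,u_1] = u_1$, this is a direct verification producing, for each $a = \alpha u_{-1} + \beta u_0 + \gamma u_1$, a companion $b$ with $Fa + Fb$ a subalgebra.

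For the forward direction, assume every $M \in \mathcal{G}$ has codimension one in $L = L_m(\Gamma)$. First suppose $m > 1$. By Theorem~\ref{t:gamma}(i),(ii) the codimension-one subalgebras of $L$ are $H_{m,0}$ (odd $m$), or $H_{m,0}$ together with $L_m^{(1)}$ (even $m$). My plan is to exhibit an element $x \in L$ with $\mathrm{ad}\,x$ not nilpotent (so $E_L(x) \neq L$ by Lemma~\ref{l:dim}) and $x \notin H_{m,0}$, and additionally $x \notin L_m^{(1)}$ when $m$ is even. Since $x \in E_L(x)$, any maximal subalgebra $M$ containing $E_L(x)$ then contains $x$, hence lies in $\mathcal{G}$ yet is neither $H_{m,0}$ nor $L_m^{(1)}$ --- forcing codimension greater than one, the desired contradiction. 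Computing $\mathrm{ad}\,v_{-1}$ on the basis $v_{-1}, v_0, \ldots, v_m$ shows (after using $\gamma_m = 0$) that $\mathrm{ad}\,v_{-1}$ is nilpotent precisely when $\Gamma = \{0\}$; in the generic case $\Gamma \neq \{0\}$ I take $x = v_{-1}$, and in the degenerate case $\Gamma = \{0\}$ (impossible for even $m$, since it would force $L_m^{(1)} = L$) I take $x = v_{-1} + v_0$, noting $[v_{-1}+v_0, v_{-1}] = -v_{-1}$ so $\mathrm{ad}\,x$ has $-1$ as an eigenvalue. Each choice has non-zero $v_{-1}$-component and so lies outside $H_{m,0}$; in the even case $v_{-1} \notin L_m^{(1)}$ because $L/L_m^{(1)}$ is one-dimensional and $v_{-1}$ represents a non-zero class modulo $L_m^{(1)}$.

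Next, suppose $m = 1$. Theorem~\ref{t:gamma}(iv) gives $L \cong L_1(0)$ when $\mathrm{char}\,F \neq 2$; in characteristic two, by Theorem~\ref{t:gamma}(v) it suffices to show $\gamma_0$ is a square. Supposing not, the matrix of $\mathrm{ad}\,u_{-1}$ on $u_{-1}, u_0, u_1$ yields characteristic polynomial $t(t^2 + \gamma_0)$ with $t^2 + \gamma_0$ irreducible over $F$, so $E_L(u_{-1}) = Fu_{-1}$. Parameterising two-dimensional subspaces of $L$ by their annihilating functional $(\alpha, \beta, \gamma)$ and imposing the subalgebra condition yields (in characteristic two) the equation $\gamma_0\gamma^2 = \beta^2$; when $\gamma_0$ is a non-square this forces $\beta = \gamma = 0$, so $Fu_0 + Fu_1$ is the only two-dimensional subalgebra of $L$, and it does not contain $u_{-1}$. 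Hence $Fu_{-1}$ is a maximal subalgebra in $\mathcal{G}$ of codimension two, contradicting the hypothesis.

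The principal difficulty is the degenerate subcase $\Gamma = \{0\}$ with $m > 1$: since $\mathrm{ad}\,v_{-1}$ is then nilpotent one loses $v_{-1}$ as a direct choice of $x$, and one must check carefully that the perturbation $v_{-1} + v_0$ both has non-nilpotent adjoint action and produces an Engel subalgebra escaping every codimension-one subalgebra of $L$.
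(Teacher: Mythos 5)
Your strategy for the forward direction with $m>1$ is the contrapositive of the paper's (exhibit a non-ad-nilpotent element that avoids every codimension-one subalgebra), and it does work for odd $m$, where $H_{m,0}$ is the only codimension-one subalgebra; but your choice of elements breaks down when $m$ is even. From $[v_0,v_{-1}]=-(v_{-1}+\gamma_0 v_m)$ one gets $v_{-1}\equiv -\gamma_0 v_m \pmod{L_m^{(1)}}$, so $v_{-1}\in L_m^{(1)}$ whenever $\gamma_0=0$ --- and $\gamma_0=0$ with $\Gamma\neq\{0\}$ is perfectly possible (for $m=2^r-2$ in characteristic two the constraint $(m+1-i)\gamma_i=0$ leaves $\gamma_1$ free, for example). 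In that case the maximal subalgebra containing $E_L(v_{-1})$ may simply be $L_m^{(1)}$, which has codimension one, and no contradiction results. Your claim that $\Gamma=\{0\}$ is impossible for even $m$ is also wrong: with $\Gamma=\{0\}$ the relations give $L_m^{(1)}=\langle v_{-1},v_0,\ldots,v_{m-1}\rangle$, a codimension-one subalgebra (exactly consistent with Theorem \ref{t:gamma}(ii), not in conflict with it), and then your fallback element $v_{-1}+v_0$ also lies inside $L_m^{(1)}$. The paper's test elements $v_{-1}+v_m$ and $v_{-1}+v_0+v_m$ are chosen precisely because $v_m\notin L_m^{(1)}$, which is what lets them escape the second codimension-one subalgebra; you would need to build $v_m$ into your elements and then redo the non-nilpotency analysis, which is essentially the paper's explicit $(\hbox{ad})^{m+2}$ computation.

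The reverse direction is also genuinely gapped. It is false over a general field that every one-dimensional subspace of $L_1(0)\cong sl(2,F)$ lies in a two-dimensional subalgebra: $Fa+Fb$ is a subalgebra exactly when the image of $b$ is an eigenvector of the map induced by $\hbox{ad}\,a$ on $L/Fa$, and the characteristic polynomial of that induced map is a quadratic which need not split over $F$. Over $F=\mathbb{R}$, the element $a$ corresponding to $\bigl(\begin{smallmatrix}0&1\\-1&0\end{smallmatrix}\bigr)$ has $\hbox{ad}\,a$ with eigenvalues $0,\pm 2i$, so $Fa$ lies in no two-dimensional subalgebra and is a maximal subalgebra of codimension two (indeed $Fa=E_L(a)$, so it even belongs to $\mathcal{G}$). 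Your promised ``direct verification'' therefore cannot produce the companion $b$ for every $a$ unless the field is quadratically closed. The paper instead quotes \cite[Theorem 1]{codone} for this direction; any self-contained replacement must confront this splitting issue rather than assert the eigenvector exists. The $m=1$, characteristic-two analysis via $Fu_{-1}=E_L(u_{-1})$ is correct and matches the paper's argument.
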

\begin{proof} Let $L = L_m(\Gamma)$ have all maximal subalgebras $M \in {\mathcal G}$ with codimension one in $L$, and suppose that $m > 1$. We have that $v_{-1} \not \in H_{m,0}$, and it is shown in the proof of \cite[Theorem 3.2]{am} that $v_m \not \in L_m^{(1)}$, so $v_{-1} + v_m$ does not belong to a subalgebra of codimension one in $L$. It follows that $E_L(v_{-1} + v_m) = L$ and $v_{-1} + v_m$ is ad-nilpotent. Now,
$$ [v_i, v_{-1} + v_m] = - v_{i-1} - \gamma_i v_m \hspace{1cm} (i \geq 1);$$ $$[v_0, v_{-1} + v_m] = - v_{-1} - (\gamma_0 + \lambda_{m0})v_m ;[v_{-1}, v_{-1} + v_m] = - v_{m-1} - \gamma_m v_m.$$
This yields that $v_i(\hbox{ad\,}(v_{-1} + v_m))^{m+2}$ is $(-1)^{m+2}\gamma_i v_{-1} + f(v_0, v_1, \ldots v_m)$ for $1 \leq i \leq m$. Since $m+2 = \hbox{dim\,}L$ it follows from Lemma \ref{l:dim} that $\gamma_i = 0$ for $1 \leq i \leq m$. But $v_{-1} + v_0 + v_m$ is also ad-nilpotent and a similar calculation shows that $v_1(\hbox{ad\,}(v_{-1} + v_0 + v_m))^{m+2}$ is $(-1)^{m+2}v_0 + g(v_{-1}, v_1, \ldots v_m)$, which contradicts Lemma \ref{l:dim}. Hence $m = 1$.
\par
Suppose that $L \not \cong L_1(0)$. Then Theorem \ref{t:gamma} (iii),(iv) implies that $F$ has characteristic two and $L$ has a basis $u_{-1}, u_0, u_1$ with multiplication $[u_{-1}, u_0] = u_{-1} + \gamma_0 u_1$, $[u_{-1}, u_1] = u_0, [u_0, u_1] = u_1$ where $\gamma_0$ is not a square in $F$. But a simple calculation verifies that $Fu_{-1}$ is a maximal subalgebra of $L$ containing $E_L(u_{-1}) = Fu_{-1}$, contradicting our hypothesis. It follows that $L \cong L_1(0)$.
\par
The converse follows from \cite[Theorem 1]{codone}.
\end{proof}
\bigskip

\begin{theor}\label{t:main} Let $L$ be a Lie algebra over a field with at least dim\,$L$ elements. Then the following are equivalent:
\begin{itemize}
\item[(i)] every maximal subalgebra $M \in {\mathcal G}$ has codimension one in $L$; and \item[(ii)] $L/\gamma(L) = S \oplus R$, where $S = S_1 \oplus \ldots \oplus S_n$, $S_i$ is a simple ideal of $L/\gamma(L)$ isomorphic to $L_1(0)$ for each $1 \leq i \leq n$, or is $\{0\}$, and $R$ is a supersolvable ideal of $L/\gamma(L)$ (possibly $\{0\}$).
\end{itemize}
\end{theor}
\begin{proof} ((i) $\Rightarrow$ (ii)): Suppose that (i) holds and assume first that $\gamma(L) = 0$. If $L$ is solvable it is supersolvable, by Corollary \ref{c:ss}, so suppose that $L$ is not solvable. Clearly $L$ has a maximal subalgebra $M_1 \in {\mathcal G}$ (otherwise, $L$ is nilpotent). If all $M \in {\mathcal G}$ have $L/M_L$ solvable, then $L^{(2)} \subseteq \gamma(L) = 0$, and $L$ is solvable, so we can further assume that $L/(M_1)_L \cong L_1(0)$, by Theorem \ref{t:am} and Lemma \ref{l:simple}. If $(M_1)_L = 0$ we have finished, so suppose that $(M_1)_L \neq 0$. 
\par
Let $M_2 \in {\mathcal G}$ be a maximal subalgebra with $(M_1)_L \not \subseteq M_2$. Then $L = (M_1)_L + M_2$. Put $B = (M_1)_L + (M_2)_L$. Since $L/(M_1)_L$ is simple, $B = (M_1)_L$ or $B = L$. The former implies that $(M_1)_L = (M_2)_L = M_2$, a contradiction, so $L = B = (M_1)_L + (M_2)_L$. Now $L/((M_1)_L \cap (M_2)_L) \cong (L/(M_1)_L) \oplus (L/(M_2)_L)$. Suppose there is such an $M_2$ with $L/(M_2)_L \cong L_1(0)$. If $(M_1)_L \cap (M_2)_L = 0$ we have finished. If $(M_1)_L \cap M_2)_L \neq 0$ then choose $M_3 \in {\mathcal G}$ to be a maximal subalgebra with $(M_1)_L \cap (M_2)_L  \not \subseteq M_3$. In similar fashion to that above we find that $L = ((M_1)_L \cap (M_2)_L) + (M_3)_L$. If there is such an $M_3$ with $L/(M_3)_L \cong L_1(0)$ and $(M_1)_L \cap (M_2)_L \cap (M_3)_L \neq 0$ we continue in the same way.
\par
Eventually obtain $L = A + (M_n)_L$, where $A = (M_1)_L \cap \ldots \cap (M_{n-1})_L$, $M_n \in {\mathcal G}$ is a maximal subalgebra of $L$ with $A \not \subseteq M_n$, $L/(A \cap (M_n)_L) \cong (L/(M_n)_L) \oplus S_1 \oplus \ldots \oplus S_{n-1}$, each $S_i \cong L/(M_i)_L \cong L_1(0)$ for $1 \leq i \leq n-1$ and either $L/(M_n)_L \cong L_1(0)$ and $A \cap (M_n)_L = 0$, in which case we have finished, or else there is no $M_n \in {\mathcal G}$ with $L/(M_n)_L \cong L_1(0)$ and dim\,$(L/(M_n)_L) \leq 2$. So suppose the latter holds, in which case $L^{(2)} \subseteq (M_n)_L$. We now have that for every $M \in \mathcal{G}$, either $A \subseteq M$ or $L^{(2)} \subseteq M$, which yields that $A \cap L^{(2)} \subseteq \gamma(L) = 0$, whence $A^{(2)} = 0$ and $A$ is solvable.  
\par
Now $L/A \cong (L/(A \cap (M_n)_L))/(A/(A \cap (M_n)_L)) \cong S_1 \oplus \ldots \oplus S_{n-1}$ (since $A/(A \cap (M_n)_L \cong L/(M_n)_L$), so $L = L^{(2)} \oplus A$. Moreover, $A$ is supersolvable, by Corollary \ref{c:ss}, and $L^{(2)} \cong L/A$, which completes the proof for the case $\gamma(L) = 0$.
\par
Now suppose that $\gamma(L) \neq 0$. Then if $M/\gamma(L) \in \mathcal{G}$ we have $M \in \mathcal{G}$, by Lemma \ref{l:eng}(ii), and so $M/\gamma(L)$ has codimension one in $L/\gamma(L)$. Thus (ii) follows from above.
\medskip

\noindent ((ii) $\Rightarrow$ (i)): So now suppose that (ii) holds and let $M \in \mathcal{G}$. Then $L/\gamma(L)$ is $\phi$-free, so $\overline{L} = L/\gamma(L) = S \oplus R = S \oplus (A + B)$, where $A = A_1 \oplus \ldots \oplus A_k = Asoc\, L$ and $B$ is abelian, by \cite[Theorems 7.3 and 7.4]{frat}. Since $R = A + B$ is supersolvable, dim\,$A_i = 1$ for each $1 \leq i \leq k$. Clearly $\overline{M} = M/\gamma(L)$ is a maximal subalgebra of $\overline{L}$. If $A \not \subseteq \overline{M}$ then there is an $A_i \not \subseteq \overline{M}$ for some $1 \leq i \leq k$. But then $\overline{L} = A_i + \overline{M}$ and $\overline{M}$ has codimension one in $\overline{L}$, whence $M$ has codimension one in $L$. So assume that $A \subseteq \overline{M}$.
\par
Suppose that $B \not \subseteq \overline{M}$.Then there is an element $b \in B$ such that $b \not \in \overline{M}$. But $[B, \overline{L}] \subseteq A \subseteq \overline{M}$, so $\overline{L} = \overline{M} + Fb$, and again $M$ has codimension one in $L$.
\par
So suppose that $R \subseteq \overline{M}$. Suppose further that there exist $i, j$ with $1 \leq i, j \leq n$ and $S_i \not \subseteq \overline{M}$, $S_j \not \subseteq \overline{M}$. Then $\overline{L} = \overline{M} + S_i = \overline{M} + S_j$. Moreover, $[\overline{L}, \overline{M} \cap S_i] = [\overline{M} + S_j, \overline{M} \cap S_i] = [\overline{M}, \overline{M} \cap S_i] \subseteq \overline{M} \cap S_i$, so $\overline{M} \cap S_i$ is an ideal of $\overline{L}$. It follows that $\overline{M} \cap S_i = 0$. Since $\overline{M} \in {\mathcal G}$, by Lemma \ref{l:eng}(iv), $E_L(r + \sum_{t=1}^n x_t) \subseteq \overline{M}$ for some $r \in R, x_t \in S_t$. But now $x_i \in E_L(r + \sum_{t=1}^n x_t) \cap S_i = 0$ for each $x_i \in S_i$, which yields that $S_i \subseteq E_L(r + \sum_{t=1}^n x_t) \cap S_i = 0$, a contradiction. We therefore have that there is just one $S_i$ with $S_i \not \subseteq \overline{M}$, in which case 
$$\overline{M} = R \oplus \sum^n_{\substack{j=1 \\ j \ne i}} S_j \oplus K,$$
where $K$ is a maximal subalgebra of $S_i$. It follows from Lemma \ref{l:simple} that $M$ has codimension one in $L$.  
\end{proof}
\bigskip

Then we have the following corollary to Theorem \ref{t:main}
\bigskip

\begin{coro}\label{c:ssd} Let $L$ be a Lie algebra over a field with at least dim\,$L$ elements. Then $L$ is supersolvable if and only if $\delta(L) = L$ and $L/\gamma(L)$ has no ideals isomorphic to $L_1(0)$.
\end{coro}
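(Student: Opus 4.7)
The plan is to derive this corollary by repackaging Theorem \ref{t:main} together with Corollary \ref{c:facss}, once we have unwound what $\delta(L)=L$ really means. The key preliminary observation is that $\delta(L)=L$ if and only if $\mathcal{D}=\emptyset$, i.e.\ every maximal subalgebra $M\in\mathcal{G}$ already has codimension one in $L$: indeed, if $\mathcal{D}\neq\emptyset$, then $D(L)$ is contained in a proper maximal subalgebra and so $\delta(L)=D(L)_L\subsetneq L$; conversely, if $\mathcal{D}=\emptyset$ then $D(L)=L$ by definition, hence $\delta(L)=L$. This reduces the corollary to a statement about the list of maximals in $\mathcal{G}$ of codimension greater than one.

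For the forward direction, assume $L$ is supersolvable. Then every maximal subalgebra of $L$ has codimension one (this is Barnes' theorem, already invoked in Corollary \ref{c:ss}), so in particular every $M\in\mathcal{G}$ has codimension one, whence $\delta(L)=L$ by the previous paragraph. Moreover $L/\gamma(L)$ is supersolvable as a quotient of $L$, and hence solvable; since $L_1(0)$ is simple and non-solvable (its derived algebra is itself), $L/\gamma(L)$ can have no ideal isomorphic to $L_1(0)$.

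For the converse, assume $\delta(L)=L$ and $L/\gamma(L)$ has no ideal isomorphic to $L_1(0)$. The first hypothesis tells us every $M\in\mathcal{G}$ has codimension one in $L$, so Theorem \ref{t:main} applies and yields a decomposition
\[
L/\gamma(L)=S\oplus R,\qquad S=S_1\oplus\cdots\oplus S_n,
\]
with each $S_i\cong L_1(0)$ (or $S=\{0\}$) and $R$ a supersolvable ideal of $L/\gamma(L)$. The second hypothesis forces $n=0$, so $L/\gamma(L)=R$ is supersolvable. Applying Corollary \ref{c:facss} with $B=\gamma(L)$ (which is permissible because $\gamma(L)\subseteq\gamma(L)$ trivially) we conclude that $L$ itself is supersolvable.

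I do not expect any real obstacle here: the work was already done in Theorem \ref{t:main} and Corollary \ref{c:facss}. The only point to be careful about is the quick verification that $L_1(0)$ is not solvable in any characteristic (so that its presence as an ideal would obstruct supersolvability), which follows immediately from the multiplication table in Theorem \ref{t:gamma}(iii): the derived algebra of $L_1(0)$ already contains $u_{-1}=[u_{-1},u_0]$, $u_0=[u_{-1},u_1]$ and $u_1=[u_0,u_1]$, so $L_1(0)^{(1)}=L_1(0)$.
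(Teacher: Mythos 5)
Your proposal is correct, and it follows the route the paper intends: the paper states this corollary without proof as an immediate consequence of Theorem \ref{t:main}, and your unwinding of $\delta(L)=L$ as ``every $M\in\mathcal{G}$ has codimension one'', combined with Theorem \ref{t:main}, Corollary \ref{c:facss}, Barnes' theorem for the forward direction, and the observation that $L_1(0)$ is perfect, is exactly the expected argument.
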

\bigskip

Next we have some basic properties of $\eta(L)$ and $\delta(L)$.
\bigskip 

\begin{lemma} \label{l:fac} Let $L$ be a Lie algebra over a field $F$ and let $B$ be an ideal of $L$. Then 
\begin{itemize}
\item[(i)] $(\eta(L) + B)/B \subseteq \eta(L/B)$ and $(\delta(L) + B)/B \subseteq \delta(L/B)$; 
\item[(ii)] if $B \subseteq \eta(L)$ then $\eta(L)/B = \eta(L/B)$;
\item[(iii)] if $F$ has at least dim\,$L$ elements and $B \subseteq \delta(L)$ then $\delta(L)/B = \delta(L/B)$.
\end{itemize}
\end{lemma}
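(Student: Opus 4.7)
The plan is to reduce everything to the standard subalgebra--ideal correspondence, which exchanges subalgebras of $L/B$ with subalgebras of $L$ containing $B$ in a codimension-preserving way, and to feed membership in $\mathcal{G}$, $\mathcal{H}$ and $\mathcal{D}$ through this correspondence using Lemma \ref{l:eng}.

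For (i), I would argue in two steps. If $N/B$ is any subalgebra of $L/B$ of codimension greater than one, then $N$ has the same codimension in $L$, so $N \in \mathcal{H}$ and hence $H(L) \subseteq N$; intersecting over all such $N/B$ gives $(H(L)+B)/B \subseteq H(L/B)$. Since $\eta(L)$ and $B$ are both ideals of $L$, $\eta(L)+B$ is an ideal of $L$, so $(\eta(L)+B)/B$ is an ideal of $L/B$ contained in $(H(L)+B)/B$, and therefore in the core $\eta(L/B)$. The argument for $\delta$ is identical, except that one must know that $M/B$ being a maximal subalgebra of $L/B$ in the $\mathcal{G}$ of $L/B$ forces $M \in \mathcal{G}$ in $L$; this is exactly Lemma \ref{l:eng}(ii), which needs no hypothesis on $F$.

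For (ii) and (iii) the inclusion from (i) is upgraded to equality by exploiting the hypothesis on $B$. In (ii), the chain $B \subseteq \eta(L) \subseteq H(L) \subseteq M$ for every $M \in \mathcal{H}$ shows that every such $M$ contains $B$, so the subalgebra--ideal correspondence restricts to a bijection between $\mathcal{H}$ viewed in $L$ and the analogous family in $L/B$; intersecting gives $H(L)/B = H(L/B)$, and a short core-chase --- every ideal of $L/B$ inside $H(L/B)$ pulls back to an ideal of $L$ inside $H(L)$ and containing $B$, and conversely --- yields $\eta(L)/B = \eta(L/B)$.

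Part (iii) is the same argument with $\mathcal{D}$ in place of $\mathcal{H}$: the assumption $B \subseteq \delta(L) \subseteq D(L)$ again ensures that every element of $\mathcal{D}$ contains $B$, and one needs the bijection between $\mathcal{D}$ in $L$ and in $L/B$. The point at which the field hypothesis enters is precisely the descent step, saying that $M \in \mathcal{G}$ with $B \subseteq M$ implies $M/B \in \mathcal{G}$ in $L/B$; this is Lemma \ref{l:eng}(iv), which requires $F$ to have at least $\dim L$ elements. The only subtlety to keep track of is this asymmetry in Lemma \ref{l:eng}: the lifting direction (ii) is free, while the descent direction (iv) needs the field condition. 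Once that is noted, the core-chase from (ii) goes through unchanged.
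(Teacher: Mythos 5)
Your argument is correct, and since the paper dismisses this lemma with ``This is straightforward,'' your write-up is simply the intended routine verification made explicit: the codimension-preserving correspondence between subalgebras of $L/B$ and subalgebras of $L$ containing $B$, combined with Lemma \ref{l:eng}(ii) for lifting membership in $\mathcal{G}$ and Lemma \ref{l:eng}(iv) for descending it. You have also correctly located the one non-trivial point, namely that the field hypothesis in part (iii) is exactly what makes the descent direction of the $\mathcal{D}$-correspondence work.
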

\begin{proof} This is straightforward.
\end{proof}
\bigskip

Define the the series $\{Z_i : i \geq 0\}$ inductively by $Z_0 = \{0\}, Z_i/Z_{i-1} = Z(L/Z_{i-1})$ for all $i \geq 1$, where $Z(L)$ is the centre of $L$. Then the {\em hypercentre} of $L$ is $Z_{\infty} = \cup_{i=0}^{\infty} Z_i$.
\bigskip

\begin{propo}\label{p:eta} For any Lie algebra $L$, $Z_{\infty} \subseteq \eta(L) \subseteq \delta(L)$.
\end{propo}
\begin{proof} Suppose that $Z_{\infty} \not \subseteq \eta(L)$. Then there is a maximal subalgebra $M \in \mathcal{H}$ and $k \geq 1$ such that $Z_k \not \subseteq M$ whereas $Z_{k-1} \subseteq M$. But now $L = M + Z_k$, from which it follows that $M$ is an ideal of $L$ and so has codimension one in $L$, a contradiction. The other inclusion is clear.
\end{proof}

\begin{propo}\label{p:delta} For any solvable Lie algebra $L$, $\delta(L)$ (and hence also $\eta(L)$) is supersolvable.
\end{propo}
\begin{proof} Let $L$ be a minimal counter-example. Suppose first that $\gamma(L) \neq 0$. Then $\delta(L)/\gamma(L) \subseteq \delta(L/\gamma(L))$ is supersolvable. But 
$$\frac{\delta(L)}{\gamma(L)} \cong \frac{(\delta(L)/\phi(L))}{(\gamma(L)/\phi(L))} \cong \frac{(\delta(L)/\phi(L))}{(\tau(L)/\phi(L))} = \frac{(\delta(L)/\phi(L))}{Z(L/\phi(L))},$$ 
by Corollary \ref{c:tau} and \cite[Theorem 2.8]{arch}. It follows that $\delta(L)/\phi(L)$ is supersolvable, and hence that $\delta(L)$ is supersolvable, by \cite[Theorem 6]{barnes}. 
\par
So suppose now that $\gamma(L) = 0$. Let $A$ be a minimal ideal of $L$ with $A \subseteq \delta(L)$. Then there is a maximal subalgebra $M \in \mathcal{G}$ of $L$ with $L = A \oplus M$. If dim\,$A > 1$ then $A \subseteq \delta(L) \subseteq M$, a contradiction. Hence dim\,$A = 1$. But $\delta(L)/A \subseteq \delta(L/A)$ is supersolvable, whence $\delta(L)$ is supersolvable. This contradiction completes the proof. 
\end{proof}
\bigskip

A subalgebra $U$ of $L$ is called a {\em c-ideal} of $L$ if there is an ideal $C$ of $L$ such that $L = U + C$ and $U \cap C \leq U_L$. Finally we have the following generalisation of \cite[Theorem 3.1]{cideal}.
\bigskip

\begin{theor}\label{t:cideal} Let $L$ be a Lie algebra over any field $F$. Then all maximal subalgebras $M$ of $L$ with $M \in \mathcal{D}$ are c-ideals of $L$ if and only if $L$ is solvable.
\end{theor}
\begin{proof} This follows closely that of \cite[Theorem 3.1]{cideal}, but we include the details for the convenience of the reader. Let $L$ be a non-solvable Lie algebra of smallest dimension in which all maximal subalgebras $M$ of $L$ with $M \in \mathcal{D}$ are c-ideals of $L$. Clearly all maximal subalgebras $M$ of $L$ with $M \in \mathcal{G}$ are c-ideals of $L$ and $\mathcal{G} \neq \emptyset$. Then all proper factor algebras of $L$ are solvable, by \cite[Lemma 2.1 (ii)]{cideal} and Lemma \ref{l:eng}. Suppose first that $L$ is simple. Let $M$ be a maximal subalgebra of $L$ with $M \in \mathcal{G}$. Then $M$ is a c-ideal so there is an ideal $C$ of $L$ such that $L = M + C$ and $M \cap C \leq M_L = 0$, as $L$ is simple. This yields that $C$ is a non-trivial proper ideal of $L$, a contradiction. If $L$ has two minimal ideals $B_1$ and $B_2$, then $L/B_1$ and $L/B_2$ are solvable and $B_1 \cap B_2 = 0$, so $L$ is solvable. Hence $L$ has a unique minimal ideal $B$ and $L/B$ is solvable.
\par
Suppose there is an element $b \in B$ such that ${\rm ad}_L b$ is not nilpotent. Let $L = E_L(b) \oplus L_1$ be the Fitting decomposition of $L$ relative to ${\rm ad}_L b$. Then $L \neq E_L(b)$ so let $M$ be a maximal subalgebra of $L$ containing $E_L(b)$. As $M \in \mathcal{G}$, it is a c-ideal and so there is an ideal $C$ of $L$ such that $L = M + C$ and $M \cap C \leq M_L$. Now $L_1 \leq B$ so $B \not \leq M_L$. It follows that $M_L = 0$ whence $M = E_L(b)$ and $B = C = L_1$. But $b \in M \cap B = 0$. Hence every element of $B$ is ad-nilpotent, yielding that $B$ is nilpotent and so $L$ is solvable, a contradiction. 
\par
The converse follows from \cite[Theorem 3.1]{cideal}.
\end{proof}
\bigskip

\end{document}